\newtheorem{theorem}{Theorem}
\newtheorem{definition}{Definition}
\newtheorem{corollary}{Corollary}
\newtheorem{proposition}{Proposition}
\newtheorem{remark}{Remark}
\def\BibTeX{{\rm B\kern-.05em{\sc i\kern-.025em b}\kern-.08em
    T\kern-.1667em\lower.7ex\hbox{E}\kern-.125emX}}
\begin{document}

\title{Scalable Min-Max Optimization via Primal-Dual Exact Pareto Optimization\thanks{This work was supported by
EPSRC Grants EP/X04047X/1 and EP/Y037243/1.}}

\author{\IEEEauthorblockN{Sangwoo Park}
\IEEEauthorblockA{\textit{Imperial College London}\\
London, United Kingdom \\
s.park@imperial.ac.uk}
\and
\IEEEauthorblockN{Stefan Vlaski}
\IEEEauthorblockA{\textit{Imperial College London}\\
London, United Kingdom \\
s.vlaski@imperial.ac.uk}
\and
\IEEEauthorblockN{Lajos Hanzo}
\IEEEauthorblockA{\textit{University of Southampton}\\
Southampton, United Kingdom \\
hanzo@soton.ac.uk}
}

%\definecolor{LH}{RGB}{30,160,156}
\definecolor{LH}{RGB}{0,0,0}
\maketitle

\begin{abstract}
In multi-objective optimization, minimizing the worst objective can be preferable to minimizing the average objective, as this ensures {\color{LH}improved} fairness across objectives. Due to the non-smooth nature of the result{\color{LH}ant} min-max optimization problem, classical subgradient{\color{LH}-}based approaches {\color{LH}typically} exhibit slow convergence. Motivated by primal-dual consensus techniques in multi-agent optimization and learning, we formulate a smooth variant of the min-max problem based on the augmented Lagrangian. The result{\color{LH}ant} Exact Pareto Optimization via Augmented Lagrangian (EPO-AL) algorithm scales better with the number of objectives than subgradient{\color{LH}-}based strategies, while exhibiting lower per-iteration complexity than recent smoothing{\color{LH}-}based {\color{LH}counterparts}. We establish that {\color{black} every fixed-point} of the proposed algorithm is both Pareto and min-max optimal under mild assumptions and demonstrate its effectiveness in numerical simulations.
\end{abstract}

\begin{IEEEkeywords}
Multi-objective optimization, min-max optimization, exact Pareto optimality, primal-dual consensus, augmented Lagrangian.
\end{IEEEkeywords}

\section{Introduction} \label{sec:intro}
We consider a multi-objective optimization problem {\color{LH}having} $K$ differentiable, positive objectives $J_1(w), \ldots, J_K(w)>0$ where $J_k(w)$ is the $k$-th objective evaluated at the model $w \in \mathbb{R}^d$. We wish to solve the \emph{weighted min-max} problem \cite{miettinen1999nonlinear, fei2016survey}:
\begin{align} \label{eq:obj}
    \min_{w \in \mathbb{R}^d} \max_{k \in [K]} r_k J_k(w),
\end{align}
given a pre-determined \emph{preference vector} $r = [r_1,\ldots,r_K]^\top$ {\color{LH}associated} with $r_k > 0$ for $k=1,\ldots,K$. We focus {\color{LH}our attention} on the setting where the gradient information $\{\nabla J_k(w)\}_{k=1}^K$ is available. Solving the min-max problem~\eqref{eq:obj} can be preferable to classical linear scalarization, i.e., $\min_{w \in \mathbb{R}^d}\sum_{k=1}^K r_k J_k(w)$, in applications where fairness is important \cite{simeone2018very, mohri2019agnostic, hou2022joint, wang2024hybrid, hamidi2025over}{\color{LH},}  since it ensures that no individual objective \( J_k(\cdot) \) is neglected in the interest of {\color{LH}improving the} average performance.

%Note that the discussed ways of defining \emph{scalar objectives}, $\max_{k \in [K]} r_k J_k(w)$ and $\sum_{k=1}^K r_k J_k(w)$, are often respectively referred to as \emph{linear scalarization} (LS) and \emph{Chebyshev scalarization} (CS) \cite{fei2016survey}.

\begin{figure}[t!]
\begin{center}\includegraphics[scale=0.09]{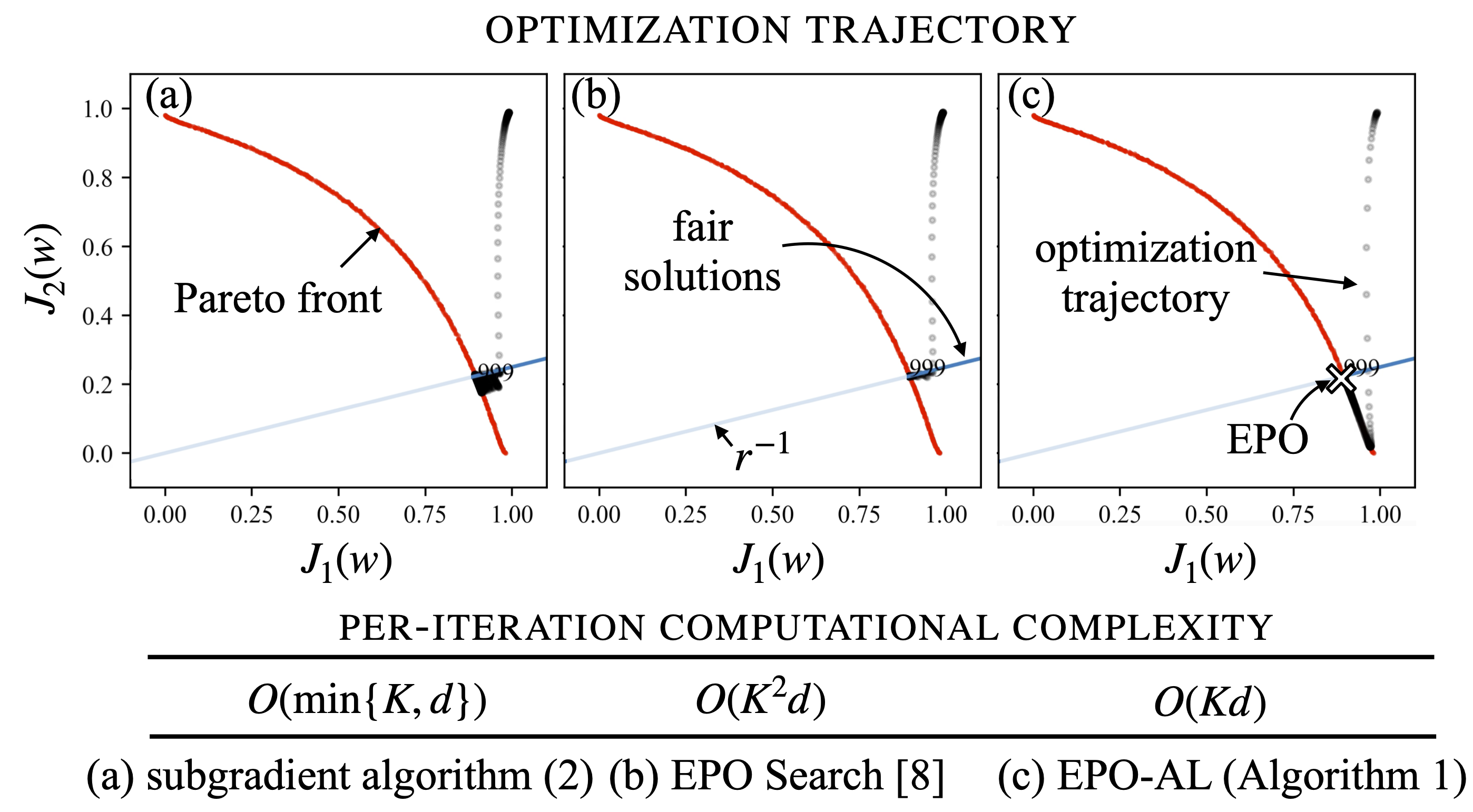}
    \end{center}
    \caption{Multi-objective optimization trajectories (top) for \textcolor{black}{subgradient} algorithm (\ref{eq:subgrad}) \textcolor{black}{in (a)}, EPO Search \cite{mahapatra2020multi} \textcolor{black}{in (b)}, and the proposed approach via augmented Lagrangian \textcolor{black}{in (c)}, referred to as EPO-AL; the table shows the per-iteration computational complexities (bottom). Optimization trajectories are obtained for $K=2$ non-convex objectives $J_1(w) = 1 - e^{-\|w-1/\sqrt{d}\|^2}$ and $J_2(w) = 1 - e^{-\|w+1/\sqrt{d}\|^2}$~\cite{lin2019pareto} with \( w \in \mathds{R}^3 \) (see Sec.~\ref{subsec:vis} for details). {\color{black} The intersection between the Pareto front (red arc, see Def.~\ref{def:weak_PO}) and the fair solutions (blue line, see Def.~\ref{def:FO}) is an exact Pareto optimal (EPO) \cite{mahapatra2020multi} solution (white cross, see Def.~\ref{def:epo}), which satisfies the min-max optimality (\ref{eq:obj}) under mild assumptions (see Prop.~\ref{prop:epo_minmax}).} Observe that the proposed strategy first finds the Pareto front, and then searches for the Pareto solution that is min-max optimal according to~\eqref{eq:obj}. {\color{black} The subgradient algorithm (\ref{eq:subgrad}) exhibits oscillations around the min-max optimal solution Pareto front due to the non-smooth behavior of maximum operator in (\ref{eq:obj}), unlike both EPO-based approaches that smoothly converge to the min-max optimal solution.}}
    \label{fig:vis}
    \vspace{-0.3cm}
 \end{figure}

Perhaps the {\color{LH}conceptually} simplest approach for solving (\ref{eq:obj}) is to consider the \textcolor{black}{subgradient algorithm} \cite[Theorem 18.5]{bauschke2017correction}:
\begin{align} \label{eq:subgrad}
    w_{i+1} = w_i - \mu \cdot r_{k^\text{active}} \nabla J_{k^\text{active}}(w_i)
\end{align}
where \( \mu \) is a step-size and $k^\text{active} \triangleq \arg\max_k r_k J_k(w)$ denotes the index for the \emph{active} objective that attains the maximum in~\eqref{eq:obj}, i.e., $\max_{k} r_k J_k(w) = r_{k^\text{active}}J_{k^\text{active}}(w)$. Note that, if more than one objective achieves the maximum for a particular model $w$, i.e., the set $\mathcal{A}(w) = \{k'\in[K]: J_{k'}(w) = \max_{k} J_{k}(w) \}$ contains more than one element, then any convex combination of $\{\nabla J_k(w)\}_{k \in \mathcal{A}(w)}$ is a subgradient of the min-max objective~\eqref{eq:obj} and can be utilized in~\eqref{eq:subgrad}~\cite{ras2024identification}.

However, the iterative update rule (\ref{eq:subgrad}) generally suffers from slow convergence rate \cite{ras2024identification}, primarily due to {\color{LH}ignoring the} gradient information {\color{LH}gleaned} from inactive objectives. This observation has motivated the development of smoothing{\color{LH}-}based alternatives~\cite{zang1980smoothing, gokcesu2019accelerating, mohri2019agnostic, ras2024identification}. Existing approaches for~\eqref{eq:obj} either rely on (\emph{i}) directly replacing the max-function by a smooth approximation \cite{zang1980smoothing, epasto2020optimal, hou2022joint, lin2024smooth, gokcesu2019accelerating},  or (\emph{ii}) designing {\color{LH}a} smooth saddle point problem \cite{mohri2019agnostic, ras2024identification, hamidi2025over}, i.e.,
\begin{align} \label{eq:saddle}
    w^\star \in \arg \min_{w\in\mathbb{R}^d} \max_{y \in \Delta^{K}} \sum_{k=1}^K r_kJ_k(w) y_k,
\end{align}
where $y_k$ is the $k$-th element of the $(K-1)$-simplex $y \in \Delta^{K}$ given by $\Delta^{K} = \{ y \in \mathbb{R}^{K}_+: y^\top \mathds{1}_{K} = 1\}$ with  $\mathds{1}_K$ being the $K\times 1$ all-one vector.

%The advantage of saddle point problem (\ref{eq:saddle}) is that one can readily apply the results on convex/nonconvex-concave/linear min-max optimization literature depending on the type of the objectives $J_k(w)$ (see, e.g., \cite{lin2020near} for an overview). We refer to \cite{ras2024identification} for general overview on recent smoothing approaches.

\section{Preliminaries} \label{sec:prelim}
In this paper, we consider an alternative smoothing approach for min-max optimization problem (\ref{eq:obj}) inspired by classical results in multi-objective optimization capturing the relationship between min-max and Pareto optimization~\cite{lin2005min}. To formalize the discussion, we introduce the following definitions.
\begin{definition}[Weak Pareto optimality \cite{miettinen1999nonlinear}] \label{def:weak_PO}
    A model $w$ is weakly Pareto optimal{\color{LH},} if there exists no $w' \neq w$ for which all objectives are reduced. Accordingly, the collection $\mathcal{P}$ of all weakly Pareto optimal points  is given by
    \begin{align} \label{eq:weak_PO}
        \mathcal{P} = \big\{ w \in \mathbb{R}^d: \nexists w'\in \mathbb{R}^d \text{ s.t. } {J}_k(w') < J_k(w) \text{ for all } k\big\}.
    \end{align}    
    % and the collection of corresponding objectives vectors are called as weak Pareto front $\mathcal{P}$, i.e., 
    % \begin{align}
    % \mathcal{P} = \big\{\mathcal{J}(w): w \in \mathcal{W}^\text{PO} \big\}.
% \end{align}
\end{definition}
The set of (weakly) Pareto optimal points is illustrated as a red arc in Fig.~\ref{fig:vis}. Second, we introduce the set of {\color{black}\emph{fair}} points~\cite{li2019fair, hamidi2025over}.

\begin{definition}[{\color{black}Fairness}~\cite{li2019fair, hamidi2025over}]\label{def:FO} A model $w$ is called {\color{black} fair} with respect to the preference vector $r$ if the weighted objectives are equal, i.e., $r_1 J_1(w)=\cdots=r_K J_K(w)$. The collection of all such {\color{black}fair} models is denoted by the set:
\begin{align} \label{eq:pf_pefect_fair}
     \mathcal{F}_r = \big\{ w \in \mathbb{R}^d: r_1 J_1(w) = \cdots = r_K J_K(w) \}.
\end{align}    
\end{definition} 
The set of {\color{black} fair} points is illustrated as a blue line in Fig.~\ref{fig:vis}. When the set of (weakly) Pareto optimal solutions and the set of {\color{black}fair solutions} intersect, this gives rise to the set of \emph{exact Pareto optimal} solutions {\color{black}(white cross in Fig.~\ref{fig:vis})}.
\begin{definition}[Exact Pareto optimality~\cite{mahapatra2020multi}] \label{def:epo}
    The set of exact Pareto optimal solutions is given by:
    \begin{align}
        \mathcal{E}_r = \mathcal{P} \cap \mathcal{F}_r.
    \end{align}
\end{definition}
The set \( \mathcal{E}_r \) is given by the intersection of the red arc (the weakly Pareto optimal solutions) and the blue line (the {\color{black}fair solutions}) in Fig.~\ref{fig:vis}. When this intersection is non-empty, we refer to \( r \) as \emph{{\color{LH}being} Pareto feasible}. We remark that compared to~\cite[Eq. (8)]{mahapatra2020multi} we define \( \mathcal{E}_r \) in terms of global (weak) Pareto optimality, rather than merely local Pareto optimality. This allows us to to establish the following proposition.
\begin{proposition}[Exact Pareto optimality implies min-max optimality]\label{prop:epo_minmax} 
    {\color{LH}Assume that} \( w^{\mathrm{EPO}} \in \mathcal{E}_r = \mathcal{P} \cap \mathcal{F}_r \) is {\color{black}weakly Pareto optimal and fair}. Then \( w^{\text{WPO}} \) is also min-max optimal as defined in~\eqref{eq:obj}:
    \begin{align}
        w^{\mathrm{EPO}} = \arg\min_{w \in \mathbb{R}^d} \max_{k \in [K]} r_k J_k(w).
    \end{align}
    \end{proposition}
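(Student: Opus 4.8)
The plan is to argue by contradiction, using the fairness condition to pin down the worst-case objective value at $w^{\mathrm{EPO}}$ and then invoking weak Pareto optimality to forbid any strict improvement. First I would exploit membership in $\mathcal{F}_r$. Since $w^{\mathrm{EPO}}$ is fair, all weighted objectives coincide, so I set $c := r_1 J_1(w^{\mathrm{EPO}}) = \cdots = r_K J_K(w^{\mathrm{EPO}})$. Because every weighted objective equals this common value, the max is attained by all of them simultaneously and $\max_{k} r_k J_k(w^{\mathrm{EPO}}) = c$.

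Next, suppose toward a contradiction that $w^{\mathrm{EPO}}$ is not min-max optimal, i.e.\ there exists some $w' \in \mathbb{R}^d$ with $\max_{k} r_k J_k(w') < c$. Since the maximum dominates each individual term, this immediately gives $r_k J_k(w') < c = r_k J_k(w^{\mathrm{EPO}})$ for \emph{every} index $k$. Dividing through by $r_k > 0$ — here the strict positivity of the preference weights is essential — yields $J_k(w') < J_k(w^{\mathrm{EPO}})$ for all $k$ at once.

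Finally, this strict componentwise improvement directly contradicts the weak Pareto optimality of $w^{\mathrm{EPO}}$ encoded in $\mathcal{P}$, which asserts that no $w'$ can strictly reduce all objectives simultaneously. Hence no such $w'$ exists, so $\max_{k} r_k J_k(w^{\mathrm{EPO}}) \le \max_{k} r_k J_k(w)$ for every $w$, and therefore $w^{\mathrm{EPO}} \in \arg\min_{w} \max_{k} r_k J_k(w)$.

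The conceptual crux — and the only real subtlety — is the observation that fairness forces the maximum to be attained by every objective \emph{jointly}, so that lowering the worst-case value below $c$ necessarily lowers \emph{all} objectives at once; it is precisely this that converts the Pareto property into min-max optimality. I would emphasize that the argument relies on the \emph{global} (rather than merely local) formulation of weak Pareto optimality highlighted in the excerpt: the competitor $w'$ produced above may be arbitrarily far from $w^{\mathrm{EPO}}$, so a purely local Pareto condition would not suffice to derive the contradiction. The positivity of the weights $r_k$ (used to cancel them) is the other assumption doing the work; differentiability and positivity of the $J_k$ are not actually needed for this implication.
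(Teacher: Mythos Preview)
Your proof is correct and follows essentially the same contradiction argument as the paper: use fairness to identify the common value of all weighted objectives at $w^{\mathrm{EPO}}$, show that any strict min-max improvement would strictly reduce every objective after cancelling $r_k>0$, and contradict weak Pareto optimality. The added remarks on the necessity of the global (as opposed to local) Pareto condition and on which assumptions are actually used are accurate and worthwhile.
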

    \begin{proof}
    We prove the statement by contradiction. Suppose that $w^\text{EPO} \in \mathcal{P}\cap \mathcal{F}_r$ does not does not minimize (\ref{eq:obj}). Then: 
    \begin{align}
        \exists w: \max_{k\in[K]} r_k J_k(w) < \max_{k\in[K]} r_k J_k(w^\text{EPO}).
    \end{align}
    We first simplify the right-hand side by using the {\color{black}fairness condition }(\ref{eq:pf_pefect_fair}). From $w^\text{EPO}\in\mathcal{F}_r$, we have
    \begin{align}
        \max_{k\in[K]} r_k J_k(w) <  r_{\ell} J_{\ell}(w^\text{EPO}) \text{ for any ${\ell}\in [K]$}.
    \end{align}
    Since each element of a set is upper bounded by its maximum:
    \begin{align} \label{eq:pf_last}
        r_{\ell} J_{\ell}(w) \le \max_{k\in[K]} r_k J_k(w) <  r_{\ell} J_{\ell}(w^\text{EPO}) \text{ for all $\ell \in [K]$}.
    \end{align}
    After cancelling~\( r_{\ell} > 0 \) on both sides of~\eqref{eq:pf_last}, we conclude that \( w^{\mathrm{EPO}} \) cannot be weakly Pareto optimal. Hence \( w^\text{EPO} \notin \mathcal{P}\cap \mathcal{F}_r \), leading to a contradiction. \end{proof}

Proposition~\ref{prop:epo_minmax} provides a sufficient condition to ensure that a solution for the min-max problem~\eqref{eq:obj} can be pursued by instead searching for a point on the (weak) Pareto frontier \( \mathcal{P} \) that is also {\color{black}fair} \( \mathcal{F}_r \). This fact does not hold in general --- see~\cite{lin2005min} for counter{\color{LH}-}examples. As long as \( \mathcal{E}_r = \mathcal{P} \cap \mathcal{F}_r \) is non-empty, however, it follows that any exact Pareto optimal point is also min-max optimal. Motivated by this consideration, in the sequel we will propose a new algorithm for min-max optimization via exact Pareto optimization. Compared to existing algorithms in the literature, the proposed strategy will rely on a single-time scale, and exhibit a reduced per-iteration complexity of \( O(K d) \) as opposed to \( O(K^2d)\) \cite{mahapatra2020multi, mahapatra2021exact, momma2022multi}. This results in better scaling with the number of objectives \( K \).

\section{Exact Pareto Optimality via Augmented Lagrangian} \label{sec:deriv_algo}
In this section, we develop an algorithm for exact Pareto optimization via the augmented Lagrangian, inspired by classical primal-dual consensus techniques in multi-agent optimization and learning --- see~\cite{Schizas08,Towfic15} for early examples and~\cite{Vlaski23} for a recent survey. To this end, note that an exact Pareto optimal solution can be pursued via:
\begin{subequations}
    \begin{align} 
        &\min_{w \in \mathbb{R}^d}  \frac{1}{K}\sum_{k=1}^K J_k(w) \label{eq:constraint_main_pre}\\%{s.t. } r_k J_k(w) = r_l J_l(w) \quad \forall k,l \in [K] \label{eq:constraint_const}.
        &\text{s.t. } r_k J_k(w) = r_{\ell}J_{\ell}(w) \ \ \ \forall k, \ell. \label{eq:constraint_const_pre}
        %{s.t. } \sqrt{L_r}\mathcal{J}(w) = 0,
    \end{align}
\end{subequations}
Here, {\color{LH}the} relation{\color{LH}ship}~\eqref{eq:constraint_main_pre} encourages Pareto optimality, while~\eqref{eq:constraint_const_pre} ensures the fairness {\color{black}condition}~\eqref{eq:pf_pefect_fair}. In light of~\eqref{eq:constraint_const_pre}, the objective~\eqref{eq:constraint_main_pre} can be replaced by any linear combination of \( J_{k}(\cdot) \) {\color{LH}having} non-negative weights. The choice of equal weighting given by \( \frac{1}{K} \) is merely a matter of simplicity. In analogy to~\cite{Schizas08,Towfic15,Vlaski23}, we replace the collection of constraints~\eqref{eq:constraint_const_pre} by a single constraint involving the aggregate constraint violations:
\begin{subequations}\label{eq:constraint}
\begin{align} 
    &\min_{w \in \mathbb{R}^d}  \frac{1}{K}\sum_{k=1}^K J_k(w) \label{eq:constraint_main}\\%{s.t. } r_k J_k(w) = r_l J_l(w) \quad \forall k,l \in [K] \label{eq:constraint_const}.
    &\text{s.t. } \frac{1}{2 K}\sum_{k=1}^K \sum_{\ell=1}^K  \| r_k J_k(w) - r_{\ell}J_{\ell}(w)\|^2=0. \label{eq:constraint_const}
    %{s.t. } \sqrt{L_r}\mathcal{J}(w) = 0,
\end{align}
\end{subequations}
The fairness {\color{black}condition}~\eqref{eq:constraint_const} can be written more compactly as:
\begin{align}\label{eq:helloooo}
    &\frac{1}{K}\sum_{k=1}^K \sum_{\ell=1}^K  \| r_k J_k(w) - r_{\ell}J_{\ell}(w)\|^2 = \mathcal{J}(w)^{\top} {L_r}\mathcal{J}(w)  =0{\color{LH},}
\end{align}
where $\mathcal{J}(w) = [J_1(w),\ldots,J_K(w)]^\top$ is a vector containing the collection of objectives evaluated {\color{LH}for the model} $w$  and $L_r$ is given by:
\begin{align} \label{eq:L_r}
    L_r = \text{diag}(r)\Big(I_{K\times K} -\frac{1}{K} \mathds{1}_K \mathds{1}_K^\top\Big)\text{diag}(r).
\end{align}
\textcolor{black}{Here,} $\text{diag}(r)$ is a diagonal matrix that contains the elements of $r$ on its diagonal. Since \( L_r \) is symmetric and positive semi-definite, it has a square root \( \sqrt{L_r} \) that satisfies \( \sqrt{L_r} \sqrt{L_r} = L_r \) and hence~\eqref{eq:constraint_const} is equivalent to:
\begin{align}
    \left\| \sqrt{L_r} \mathcal{J}(w) \right\|^2 =0 \Longleftrightarrow  \sqrt{L_r}\mathcal{J}(w) = 0.
\end{align}

We can then define the corresponding augmented Lagrangian~\cite[Sec. 4]{bertsekas1997nonlinear} as
\begin{align}
    \mathcal{L}(w,\lambda) &=\frac{1}{K}\mathds{1}_K^\top \mathcal{J}(w)  + \lambda^\top \sqrt{L_r}\mathcal{J}(w) + \frac{\eta}{2}\big|\big| \sqrt{L_r}\mathcal{J}(w)\big|\big|^2, 
\end{align}
where $\eta > 0$ is a penalty parameter and $\lambda$ is the corresponding Lagrangian multiplier. We then update the \emph{primal} variable $w$ and the \emph{dual} variable $\lambda$  in an iterative first-order approach as in \cite{arrow1958studies, alghunaim2020linear}:
\begin{subequations}
\begin{align}
    \label{eq:lag_pri}
    w_{i} &= w_{i-1} - \mu \nabla_{w} \mathcal{L}(w_{i-1},\lambda_{i-1}) \\&= w_{i-1} - \mu G(w_{i-1})\left[ \frac{1}{K}\mathds{1}_K + \sqrt{L_r}\lambda_{i-1} + \eta L_r \mathcal{J}(w_{i-1})\right] \nonumber\\
    \lambda_{i} &= \lambda_{i-1} + \mu \nabla_\lambda \mathcal{L}(w_{i-1}, \lambda_{i-1}) = \lambda_{i-1} + \mu \sqrt{L_r}\mathcal{J}(w_{i-1}),\label{eq:lag_dual}
\end{align}    
\end{subequations}
where $G(w) = \big[ \nabla J_1(w),\ldots, \nabla J_K(w)\big]$ is a $d \times K$ matrix that collects the gradients from the $K$ objectives evaluated {\color{LH}for the} model $w$ and $\mu>0$ is the step size.
Multiplying (\ref{eq:lag_dual}) by \( \sqrt{L_r} \) from the left and by defining  $p_i \triangleq (1/K)\mathds{1}_K + \sqrt{L_r}\lambda_i $, we obtain the {\color{LH}following} equivalent formulation:
\begin{subequations}
    \begin{align}
    w_{i} &= w_{i-1} - \mu G(w_{i-1})\big[p_{i-1} + \eta L_r\mathcal{J}(w_{i-1})\big] \label{eq:lag_last_primal}\\
    p_{i} &= p_{i-1} +\mu L_r \mathcal{J}(w_{i-1}).  \label{eq:lag_last_dual}
\end{align}
\end{subequations}
From the initialization $\lambda_0=0$, we find the initial condition $p_0 = (1/K)\mathds{1}_K$. Lastly, by we apply the \textcolor{black}{positivity} operator $[\cdot]_+ = \max\{ \cdot, 0\}$ element-wise to $p_{i-1}$ in (\ref{eq:lag_last_primal}), which yields Algorithm~\ref{alg:PO_minmax}.

\begin{algorithm}[h] %! 
    \DontPrintSemicolon
    %\LinesNumbered
    \smallskip 
    \KwIn{ $K$ positive, differentiable, objectives $J_1(\cdot), ..., J_K(\cdot)$ with $J_k: \mathbb{R}^d \rightarrow \mathbb{R}_+$; step size $\mu>0$;  penalty parameter $\eta > 0$.}
    \vspace{0.15cm}
    \hrule
    \vspace{0.15cm}
    {\bf initialize} $w_0 \in \mathbb{R}^d, p_0 = \mathds{1}_K/K, z_t = \mathbf{0}_K$, where $\mathbf{0}_K$ is the all-zero vector of size $K$.
    
    \For{\em $i=1,2,\ldots$}{
    \vspace{-0.2cm}
    \begin{subequations}
    \begin{align}
        w_{i} &= w_{i-1} - \mu  G(w_{i-1}) \big([p_{i-1}]_+ + \eta  L_r \mathcal{J}(w_{i-1})\big) \label{eq:primal} \\
        p_{i} &= p_{i-1} +   \mu L_r \mathcal{J}(w_{i-1}) \label{eq:dual}
    \end{align}
\end{subequations}
    }
      
    \caption{Exact Pareto Optimization via Augmented Lagrangian (EPO-AL)}
    \label{alg:PO_minmax}
    \end{algorithm}

The per-iteration complexity of EPO-AL scales well with the number of objectives, as summarized in the following remark. 
\begin{remark}[Per-iteration computational complexity] \label{rmk:compute_complexity}
Each iteration of EPO-AL requires $O(K d)$ computations, resulting from the evaluation of the $d \times K$ gradient matrix $G(w_{i-1})$ and multiplication with the $K \times 1$ vector $([p_{i-1}]_+ + \eta L_r \mathcal{J}(w_{i-1}))$.
\end{remark}
Note that typical multi-objective optimization algorithms~\cite{chen2023three}, including the ones that aim {\color{LH}for} finding exact Pareto optimal solutions~\cite{ mahapatra2020multi, mahapatra2021exact, momma2022multi} generally involve the evaluation of $G(w_{i-1})^\top G(w_{i-1})${\color{LH},} which requires {\color{LH}on the order of} $O(K^2d)$ computations; subgradient-based approaches (\ref{eq:subgrad}) require $O(K)$ computations to identify the active objective, and \( O(d) \) computations to evaluate the corresponding subgradient.

\section{Fixed Point Analysis} \label{sec:fixed}
Before analyzing the fixed-point behavior of Algorithm~\ref{alg:PO_minmax}, we first recall the notion of \emph{Pareto stationarity} \cite{desideri2012multiple}. 

\begin{definition}[Pareto stationarity] \label{def:PS}
    A model is called Pareto stationary if one can find a convex combination of the gradients $\{\nabla J_k(w)\}_{k=1}^K$ that yields an all-zero vector. Hence, the collection of all Pareto stationary points $\mathcal{P}^\text{st}$ is given by:
    \begin{align}
        \mathcal{P}^\text{st} = \Big\{w \in \mathbb{R}^d: \min_{p \in \Delta^K}|| G(w) p || = 0 \Big\}.
    \end{align}
\end{definition}
Note that weak Pareto optimality implies Pareto stationarity, i.e., $\mathcal{P} \subseteq \mathcal{P}^\text{st}$ \cite[Lemma 2.2]{tanabe2019proximal}. We now \textcolor{black}{characterize} the fixed-point behavior of EPO-AL. 

\begin{theorem}[Fixed point analysis]  \label{theor}
    {\color{LH}Assume that} Algorithm~\ref{alg:PO_minmax} converges to a pair of fixed-points $w_\infty$ and $p_\infty$. Then \( w_{\infty} \) is both Pareto stationary and {\color{black}fair}.
\end{theorem}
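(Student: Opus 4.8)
The plan is to exploit the two fixed-point equations directly. Setting $w_i = w_{i-1} = w_\infty$ and $p_i = p_{i-1} = p_\infty$ in the dual update~\eqref{eq:dual} immediately forces the increment to vanish, giving $\mu L_r \mathcal{J}(w_\infty) = 0$, and since $\mu > 0$ we get $L_r \mathcal{J}(w_\infty) = 0$. Doing the same in the primal update~\eqref{eq:primal} yields $\mu\, G(w_\infty)\big([p_\infty]_+ + \eta L_r \mathcal{J}(w_\infty)\big) = 0$; substituting the dual condition kills the penalty term, so the primal condition collapses to the clean relation $G(w_\infty)[p_\infty]_+ = 0$. These two consequences are the entire basis for the proof, and each targets one of the two claimed properties.

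Fairness follows from the dual condition alone. Since $L_r$ is symmetric positive semi-definite, $L_r \mathcal{J}(w_\infty) = 0$ is equivalent to the quadratic form $\mathcal{J}(w_\infty)^\top L_r \mathcal{J}(w_\infty)$ vanishing, and by the identity~\eqref{eq:helloooo} this quadratic form equals a positive multiple of $\sum_{k,\ell}\|r_k J_k(w_\infty) - r_\ell J_\ell(w_\infty)\|^2$. A sum of squares is zero only if every term is zero, so $r_k J_k(w_\infty) = r_\ell J_\ell(w_\infty)$ for all $k,\ell$, i.e. $w_\infty \in \mathcal{F}_r$. (Equivalently, one can argue via $\operatorname{null}(L_r)$: writing $L_r = \operatorname{diag}(r)\,M\,\operatorname{diag}(r)$ with $M = I - \tfrac{1}{K}\mathds{1}_K\mathds{1}_K^\top$, the condition forces $\operatorname{diag}(r)\mathcal{J}(w_\infty) \in \operatorname{span}\{\mathds{1}_K\}$.)

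Pareto stationarity is where the real work lies. From $G(w_\infty)[p_\infty]_+ = 0$, normalizing $[p_\infty]_+$ onto the simplex would immediately certify $\min_{p\in\Delta^K}\|G(w_\infty)p\| = 0$ via Definition~\ref{def:PS} --- but only if $[p_\infty]_+ \neq \mathbf{0}_K$. I expect this non-degeneracy to be the main obstacle, since the fixed-point relation is vacuously satisfied when $[p_\infty]_+$ collapses to zero. To rule this out I would exhibit a conserved quantity of the dual recursion. The plan is to show $\mathds{1}_K^\top \operatorname{diag}(r)^{-1} L_r = 0$, which holds because $\mathds{1}_K^\top M = 0$; consequently the scalar $\mathds{1}_K^\top \operatorname{diag}(r)^{-1} p_i = \sum_{k} (p_i)_k / r_k$ is invariant along the iterations~\eqref{eq:dual}. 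Evaluating it at the initialization $p_0 = \mathds{1}_K/K$ gives the fixed positive value $\tfrac{1}{K}\sum_k 1/r_k > 0$, and hence $\sum_k (p_\infty)_k / r_k > 0$.

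Because the weights $1/r_k$ are strictly positive, a strictly positive weighted sum of the entries of $p_\infty$ rules out $p_\infty \le 0$ componentwise, so at least one entry is positive and $\mathds{1}_K^\top [p_\infty]_+ > 0$. Setting $\bar p = [p_\infty]_+ / \big(\mathds{1}_K^\top [p_\infty]_+\big) \in \Delta^K$ then gives $G(w_\infty)\bar p = 0$, which is precisely Pareto stationarity. Combined with the fairness conclusion, this establishes the theorem. The only step requiring care beyond routine substitution is the conservation argument of the last two paragraphs, which is what prevents the trivial degenerate fixed point from undermining the claim.
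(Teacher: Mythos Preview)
Your proposal is correct and follows essentially the same approach as the paper: both substitute the fixed points into the primal and dual updates to obtain $L_r\mathcal{J}(w_\infty)=0$ (fairness) and $G(w_\infty)[p_\infty]_+=0$, then rule out the degenerate case $[p_\infty]_+=0$ by exhibiting the conserved quantity $\sum_k p_{k,i}/r_k$, which stays equal to its positive initial value $\tfrac{1}{K}\sum_k 1/r_k$ because $r^{-1}$ lies in the null space of $L_r$. The only cosmetic differences are that the paper uses $L_r r^{-1}=0$ from the right (equivalent by symmetry) and bounds $\max_k p_{k,i}$ explicitly rather than normalizing $[p_\infty]_+$ onto the simplex.
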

\begin{proof}
We begin the proof by substituting $w_{\infty}$ for \( w_{i} \) and \( w_{i-1} \) as well as \( p_{\infty} \) for \( p_i \) and \( p_{i-1} \) in~\eqref{eq:primal}--\eqref{eq:dual}, which yields:
\begin{align}
    G(w_\infty)\big([p_\infty]_+ + \eta L_r \mathcal{J}(w_\infty)\big) = 0; \label{eq:primal_fixed}\\
    L_r \mathcal{J}(w_\infty) = 0.\label{eq:dual_fixed}
\end{align}
From (\ref{eq:dual_fixed}), \eqref{eq:primal_fixed} simplifies to:
\begin{align}
    G(w_\infty)[p_\infty]_+ = \sum_{k=1}^K [p_{k, \infty}]_+ \nabla J_k(w_{\infty}) = 0, \label{eq:primal_fixed_2}
\end{align}
which ensures that $w_{\infty}$ is Pareto stationary{\color{LH},} provided that \( [p_{\infty}]_{+} \) contains at least one non-zero entry, which we will investigate further below. In light of~\eqref{eq:helloooo}, the second condition (\ref{eq:dual_fixed}) implies that $w_\infty$ \textcolor{black}{yields} $r_1J_1(w_\infty) = \cdots = r_K J_K (w_\infty)$, and hence \( w_{\infty} \in \mathcal{F}_r \) {\color{black}satisfies the fairness condition}.

The only remaining step is to show that $p_{\infty}$ contains at least one strictly positive element. To this end, observe from~\eqref{eq:L_r} that \( r^{-1} \triangleq [ r_1^{-1}, \ldots, r_K^{-1}] \) is in the nullspace of \( L_r \):
\begin{align}
    L_r r^{-1} =&\: \text{diag}(r)\Big(I_{K\times K} -\frac{1}{K} \mathds{1}_K \mathds{1}_K^\top\Big)\text{diag}(r) r^{-1} \notag \\
    =&\: \text{diag}(r)\Big(I_{K\times K} -\frac{1}{K} \mathds{1}_K \mathds{1}_K^\top\Big)\mathds{1}_K = 0.
\end{align}
Hence, {\color{LH}by} taking the inner product of~\eqref{eq:dual} with \( r^{-1} \), we {\color{LH}have}:
\begin{align}
    (r^{-1})^{\mathsf{T}} p_i =&\: (r^{-1})^{\mathsf{T}} p_{i-1} + \mu (r^{-1})^{\mathsf{T}} L_r \mathcal{J}({w_{i-1}}) = (r^{-1})^{\mathsf{T}} p_{i-1}.
\end{align}
{\color{LH}Upon i}terating all the way back to \( p_0 \), we find {\color{LH}that}:
\begin{align}
    \sum_{k=1}^K \frac{p_{k, i}}{r_k} = \sum_{k=1}^{K} \frac{p_{k, 0}}{r_k} = \sum_{k=1}^{K} \frac{1}{K r_k} \triangleq \epsilon > 0.
\end{align}
For \( \sum_{k=1}^K {p_{k, i}}/{r_k} \) to be greater than \( \epsilon \), there must exist at least one \( k' \) such that \( {p_{k', i}}/{r_{k'}} \ge \epsilon \) and hence \( p_{k', i} \ge \epsilon r_{k'} \ge \epsilon \min_k r_k \). We conclude that for all \( i \) {\color{LH}we have}:
\begin{align}
    \max_k p_{k, i} \ge \left(\min_k r_k\right) \left(\sum_{k=1}^{K} \frac{1}{K r_k} \right).
\end{align}
{\color{LH}Upon a}ssuming that \( p_i \) approaches the fixed-point \( p_{\infty} \) and taking limits yields the desired result.
\end{proof}

\begin{corollary}[Convex objectives] \label{corr:fixed_point}
Assume that the objectives $J_k(w)$ are convex for all $k=1,\ldots,K$. Then, $w_\infty \in \mathcal{E}_r = \mathcal{P} \cap \mathcal{F}_r$ is exact Pareto optimal and solves the min-max problem~\eqref{eq:obj}.
\end{corollary}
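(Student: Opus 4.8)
The plan is to assemble the conclusion from three ingredients already in hand: Theorem~\ref{theor}, which guarantees that the fixed point $w_\infty$ is simultaneously Pareto stationary and fair; a convexity argument that promotes Pareto stationarity to genuine weak Pareto optimality; and Proposition~\ref{prop:epo_minmax}, which converts exact Pareto optimality into min-max optimality. Since Theorem~\ref{theor} already delivers $w_\infty \in \mathcal{P}^\text{st} \cap \mathcal{F}_r$ without any convexity hypothesis, the only gap to close is the inclusion $\mathcal{P}^\text{st} \subseteq \mathcal{P}$, i.e., that under convexity every Pareto stationary point is weakly Pareto optimal. This is the reverse of the general inclusion $\mathcal{P} \subseteq \mathcal{P}^\text{st}$ recalled after Definition~\ref{def:PS}, and it is exactly where convexity enters.

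For this key step I would argue by contradiction. Suppose $w_\infty$ is Pareto stationary but not weakly Pareto optimal. Pareto stationarity (Definition~\ref{def:PS}) supplies weights $p \in \Delta^K$ with $\sum_{k=1}^K p_k \nabla J_k(w_\infty) = 0$, while failure of weak Pareto optimality (Definition~\ref{def:weak_PO}) supplies a point $w'$ with $J_k(w') < J_k(w_\infty)$ for every $k$. Convexity of each $J_k$ then yields the first-order inequality $\nabla J_k(w_\infty)^\top (w' - w_\infty) \le J_k(w') - J_k(w_\infty) < 0$ for all $k$. Forming the $p$-weighted combination of these inequalities, the left-hand side collapses to $\left(\sum_k p_k \nabla J_k(w_\infty)\right)^\top (w' - w_\infty) = 0$ by stationarity, so the resulting chain $0 \le \sum_k p_k (J_k(w') - J_k(w_\infty)) < 0$ is absurd; the strictness of the final bound follows because $p \in \Delta^K$ forces at least one $p_k > 0$, and its contribution is strictly negative. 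This contradiction establishes $w_\infty \in \mathcal{P}$.

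Combining $w_\infty \in \mathcal{P}$ with the fairness $w_\infty \in \mathcal{F}_r$ from Theorem~\ref{theor} gives $w_\infty \in \mathcal{E}_r = \mathcal{P} \cap \mathcal{F}_r$ (Definition~\ref{def:epo}), so $w_\infty$ is exact Pareto optimal. Proposition~\ref{prop:epo_minmax} then applies directly to conclude that $w_\infty$ solves the min-max problem~\eqref{eq:obj}. I expect the only substantive step to be the convexity-based upgrade from stationarity to weak optimality; everything else is bookkeeping that chains together results already proved. The subtlety worth stating carefully is precisely why the weighted objective difference is strictly (not merely weakly) negative, which hinges on $p$ summing to one so that it cannot vanish identically, ensuring at least one strictly negative contribution survives.
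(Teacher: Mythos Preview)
Your proposal is correct and follows the same structure as the paper's proof: invoke Theorem~\ref{theor} for Pareto stationarity and fairness, upgrade stationarity to weak Pareto optimality under convexity, then apply Proposition~\ref{prop:epo_minmax}. The only difference is that the paper outsources the upgrade step to a citation (\cite[Lemma 2.2]{tanabe2019proximal}), whereas you supply the elementary first-order convexity argument explicitly; your self-contained version is sound and the strictness reasoning via $p \in \Delta^K$ is handled correctly.
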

\begin{proof}
    The result is immediate after recognizing that Pareto stationarity implies weak Pareto optimality \cite[Lemma 2.2]{tanabe2019proximal} for convex objectives. Hence, \( w^{\infty} \in \mathcal{P} \). Theorem~\ref{theor} already established that \( w^{\infty} \in \mathcal{F}_r\). Under these conditions, Proposition~\ref{prop:epo_minmax} ensures that \( w^{\infty} \in \mathcal{E}_r \) and also solves~\eqref{eq:obj}.
\end{proof}

\section{Empirical Evaluation}\label{sec:empirical}
We empirically evaluate our algorithm using {\color{LH}a pair of} synthetic experiments: when the objectives $\{J_k(w)\}_{k=1}^K$ are (\emph{i}) all convex and are (\emph{ii}) all non-convex\footnote{Code is available at \url{https://github.com/sangwoo-p/EPO_AL}}. Specifically, for the convex scenario, we consider (\emph{i}) $J_k(w)=\sqrt{1+||w-w_k||^2}-1$; for the non-convex scenario we take  (\emph{ii}) $J_k(w) = 1 - e^{-||w-w_k||^2}$ adapted from \cite{lin2019pareto} to deal with  more than two objectives.  Specifically, the $K$ \emph{anchor} points $\{w_k\}_{k=1}^K$ are chosen uniformly at random on the unit $(d-1)$-surface, and we also choose the preference vector $r$ by sampling uniformly at random in the {\color{black}{interior of the}} probability simplex {\color{black}$\Delta^{K}_{+}$} for which we define as {\color{black}$\Delta^{K}_{+}=\{ y \in \Delta^K: y_k > 1/3K \ \forall k \}$. We impose such strict positivity to avoid extreme cases where some objectives are essentially ignored.} We choose the initial model $w_0$ by randomly sampling from {\color{LH}the} unit $(d-1)$-sphere. We account for these randomnesses by running {\color{black}$30$} independent experiments, unless specified otherwise.

We compare the proposed algorithm {\color{LH}to} (\emph{i}) the subgradient {\color{black}algorithm} (\ref{eq:subgrad}) where the active index $k^\text{active}$ is chosen by breaking any tie at random; (\emph{ii}) \textcolor{black}{the} smooth-max approach \cite{lin2024smooth, gokcesu2019accelerating, hou2022joint} that updates $w \leftarrow w - \mu \nabla \mathrm{LSE}_\tau{\color{LH}[}r_1 J_1(w), \ldots, r_K J_K(w){\color{LH}],}$ where $\mathrm{LSE}_\tau[v_1,\ldots,v_K]$ is the smooth-max function defined as $\mathrm{LSE}_\tau[v_1,\ldots v_K] = \log \sum_{k=1}^K e^{v_k/\tau}$; and EPO Search \cite{mahapatra2020multi}{\color{LH},} which has the form of $w \leftarrow w - \mu G(w) \beta$ where $\beta \in \Delta^K$ is chosen by solving a $K$-dimensional linear program \cite{mahapatra2020multi} at every iteration.

 \begin{figure*}[t]
\begin{center}\includegraphics[scale=0.083]{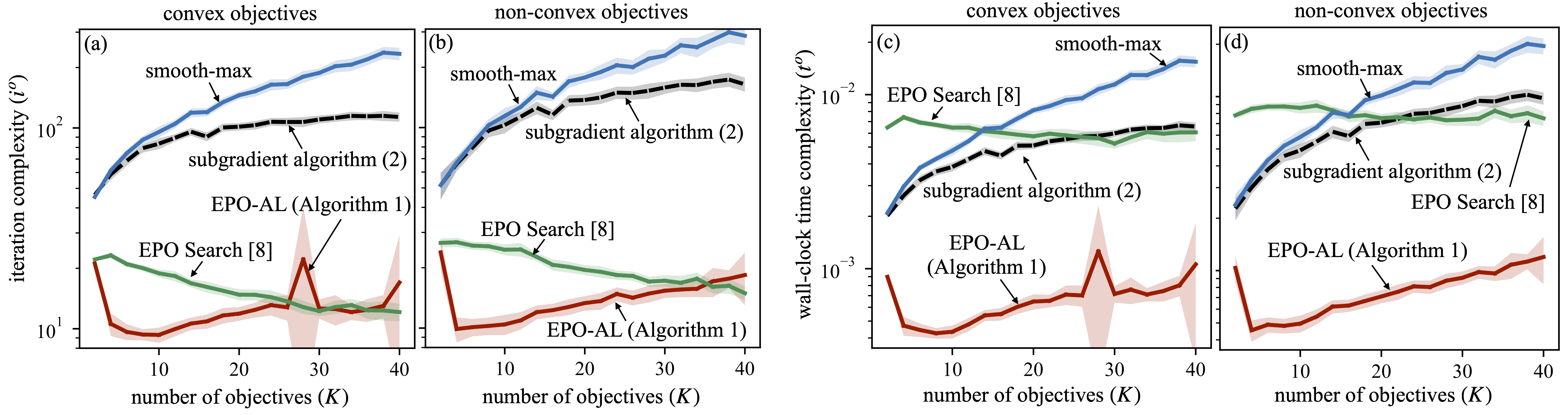}
    \end{center}
    \caption{Iteration complexity $i^o$ (a,b)  and wall-clock time complexity $t^o$ (c,d) as a function of number of objectives $K$. The results are averaged over {\color{black} $30$} independent experiments {\color{black}after removing the minimum and  maximum,} where each experiment assumes different preference vector $r$ and different initial model $w_0$. Shaded area corresponds to $99 \%$ confidence interval.}%, and the transparency of the plot indicates the ratio of successfully reaching the target performance $J^\star$ among $100$ runs.}
    \label{fig:time_complexity}
    \vspace{-0.4cm}
 \end{figure*}

% \begin{figure}[h]
% \begin{center}\includegraphics[scale=0.12]{}
%     \end{center}
%     \caption{Optimization trajectories of four different iterative algorithms. We consider the case where all the objectives are convex, i.e., $J_k(w) = \sqrt{1+||w-w_k||^2}-1$. We set $d=3$ and $K=2$.}
%     \label{fig:vis}
%  \end{figure}

\subsection{Visualization of the optimization trajectory} 
\label{subsec:vis}
We first visualize the optimization trajectory of the {\color{LH}schemes} considered when all the objectives are non-convex \textcolor{black}{in Fig.~\ref{fig:vis}}. We omit the smooth-max approach as it fails to converge to the optimal point \cite{epasto2020optimal, gokcesu2019accelerating} for large enough $\tau$ that gives {\color{LH}us a} distinct optimization trajectory {\color{LH}along} with {\color{LH}the} subgradient {\color{black}algorithm} (\ref{eq:subgrad}). We set $d=3$ and $K=2$. {\color{LH}An} interesting observation here is that existing approaches prioritize converging to the fairness constraint before searching for the Pareto front, while the proposed EPO-AL algorithm rapidly converges to the Pareto front and then sweeps it for a solution that also {\color{black}satisfies the fairness condition}. We set $\mu=0.1$ for all the schemes {\color{LH}along} with $\eta=10$ for EPO-AL, $r=[0.2, 0.8]^{\top}$, and choose the two anchors following \cite{lin2019pareto}.

% \begin{figure}[t]
% \begin{center}\includegraphics[scale=0.078]{}
%     \end{center}
%     \caption{Iteration complexity $i^o$ as a function of number of objectives $K$. The results are averaged over {\color{black}$30$} independent experiments {\color{black}after removing the minimum and  maximum,} where each experiment assumes different preference vector $r$ and different initial model $w_0$. Shaded area corresponds to $99 \%$ confidence interval.}%, and the transparency of the plot indicates the ratio of successfully reaching the target performance $J^\star$ among $100$ runs.}
%     \label{fig:iter_complexity}
%  \end{figure}

\subsection{Iteration/{\color{black}time} complexity}
We now consider the iteration complexity of the four {\color{LH}algorithms} considered  by measuring the minimum number of iterations {\color{LH}required} to achieve a specified target accuracy. In order to fairly compare different algorithms, we set the step size $\mu$ for each algorithm separately by searching over $\mathcal{G}_{\mu} = [10^{-3}, 10^{-1}]$ in a log-scaled grid of size $10$. {\color{LH}As for the} EPO-AL and smooth-max algorithm, we also set the penalty parameter $\eta$ and temperature parameter $\tau$ by searching over $\mathcal{G}_\eta = [10^{-1}, 10^2]$ and $\mathcal{G}_{\tau} = [10^{-2}, 10]$ respectively, both in a log-scaled grid of size $10$. We set the maximum number of iterations as $1000$ and set the dimension of the model as $d=100$, i.e., $w\in\mathbb{R}^{100}$.

Specifically, we define the target performance $J^\star$ as the minimum value attained by the subgradient  {\color{black}algorithm} (\ref{eq:subgrad}) throughout all of the possible step size choices. We then evaluate the iteration complexity for a fixed choice of $\mu$ (for all the algorithms) as well as of $\eta$ (for EPO-AL) and of $\tau$ (for smooth-max) by $i^o(\mu,\eta, \tau) = \min\{i:  | \max_k r_k J_k(w_i) - J^\star |\leq \epsilon\}$ with {\color{LH}the} tolerance level set {\color{LH}to} $\epsilon=0.01$. We then finally evaluate the iteration complexity $i^o$ for each scheme by choosing the minimum $i^o(\mu,\eta,\tau)$ among the possible choices of $\mu$, $\eta$, and $\tau$, i.e., $i^o = \min_{\mu \in \mathcal{G}_\mu} i^o(\mu,\eta,\tau)$ for {\color{LH}the} subgradient {\color{black} algorithm} (\ref{eq:subgrad}) and EPO Search; $i^o = \min_{(\mu,\tau)\in \mathcal{G}_\mu \times \mathcal{G}_\tau} i^o(\mu,\eta,\tau)$ for smooth-max; and $i^o = \min_{(\mu,\eta)\in \mathcal{G}_\mu \times \mathcal{G}_\eta} i^o(\mu,\eta,\tau)$ for EPO-AL.

Fig.~\ref{fig:time_complexity} (left) shows the iteration complexity as a function of the number of objectives $K$ for both convex and non-convex functions $J_k(w)$. It is observed that both {\color{LH}the} classical EPO Search \cite{mahapatra2020multi} and the proposed EPO-AL algorithm scale well with the number of objectives{\color{LH},} unlike the subgradient {\color{black}algorithm (\ref{eq:subgrad})} that scales poorly {\color{LH}upon} increasing {\color{LH}the} number of objectives.  {\color{black} Since the iteration count does not capture the computational complexity associated with each iteration (see Fig.~\ref{fig:vis}), we next investigate the minimum \emph{total} complexity required to reach the target performance $J^*$.

Fig.~\ref{fig:time_complexity} (right) shows the wall-clock time complexity $t^o$, defined as the actual total time required to process the number of iterations $i^o$. The wall-clock time is evaluated on Apple M1 hardware. The fact that EPO Search \cite{mahapatra2020multi} involves the solution of a linear program at every iteration results in higher per-iteration complexity and hence higher total runtime compared to the proposed EPO-AL strategy, which only involves a single timescale.
}

\section{Conclusion}
A new algorithm {\color{LH}was proposed} for min-max optimization via exact Pareto optimization. To derive the strategy we made use of primal-dual consensus techniques via the augmented Lagrangian, resulting in an algorithm which scales better with the number of objectives than a subgradient{\color{LH}-}based approach, while maintaining a lower per-iteration complexity than other smoothing{\color{LH}-}based algorithms. {\color{black}Experimental results showed that the proposed algorithm achieves the target performance by imposing lower total complexity as compared to the other benchmarks, demonstrating its scalability with the  number of objectives.}
\vspace{-0.1cm}
\bibliographystyle{ieeetr} %alpha, apalike, ieeetr
\bibliography{ref.bib}

\begin{thebibliography}{10}

\bibitem{miettinen1999nonlinear}
K.~Miettinen, {\em Nonlinear multiobjective optimization}, vol.~12.
\newblock Springer Science \& Business Media, 1999.

\bibitem{fei2016survey}
Z.~Fei, B.~Li, S.~Yang, C.~Xing, H.~Chen, and L.~Hanzo, ``A survey of multi-objective optimization in wireless sensor networks: Metrics, algorithms, and open problems,'' {\em IEEE Communications Surveys \& Tutorials}, vol.~19, no.~1, pp.~550--586, 2016.

\bibitem{simeone2018very}
O.~Simeone, ``A very brief introduction to machine learning with applications to communication systems,'' {\em IEEE Transactions on Cognitive Communications and Networking}, vol.~4, no.~4, pp.~648--664, 2018.

\bibitem{mohri2019agnostic}
M.~Mohri, G.~Sivek, and A.~T. Suresh, ``Agnostic federated learning,'' in {\em ICML}, pp.~4615--4625, PMLR, 2019.

\bibitem{hou2022joint}
Q.~Hou, Y.~Cai, Q.~Hu, M.~Lee, and G.~Yu, ``Joint resource allocation and trajectory design for multi-{UAV} systems with moving users: Pointer network and unfolding,'' {\em IEEE Transactions on Wireless Communications}, vol.~22, no.~5, pp.~3310--3323, 2022.

\bibitem{wang2024hybrid}
Y.~Wang, C.~Yang, and M.~Peng, ``Hybrid precoding with low-resolution pss for wideband {T}erahertz communication systems in the face of beam squint,'' {\em arXiv preprint arXiv:2406.16303}, 2024.

\bibitem{hamidi2025over}
S.~M. Hamidi, A.~Bereyhi, S.~Asaad, and H.~V. Poor, ``Over-the-air fair federated learning via multi-objective optimization,'' {\em arXiv preprint arXiv:2501.03392}, 2025.

\bibitem{mahapatra2020multi}
D.~Mahapatra and V.~Rajan, ``Multi-task learning with user preferences: Gradient descent with controlled ascent in {P}areto optimization,'' in {\em ICML}, pp.~6597--6607, PMLR, 2020.

\bibitem{lin2019pareto}
X.~Lin, H.-L. Zhen, Z.~Li, Q.-F. Zhang, and S.~Kwong, ``Pareto multi-task learning,'' {\em NeurIPS}, vol.~32, 2019.

\bibitem{bauschke2017correction}
H.~H. Bauschke, P.~L. Combettes, H.~H. Bauschke, and P.~L. Combettes, {\em Correction to: convex analysis and monotone operator theory in Hilbert spaces}.
\newblock Springer, 2017.

\bibitem{ras2024identification}
C.~Ras, M.~Tam, and D.~Uteda, ``Identification of active subfunctions in finite-max minimisation via a smooth reformulation,'' {\em arXiv preprint arXiv:2404.10326}, 2024.

\bibitem{zang1980smoothing}
I.~Zang, ``A smoothing-out technique for min—max optimization,'' {\em Mathematical Programming}, vol.~19, pp.~61--77, 1980.

\bibitem{gokcesu2019accelerating}
H.~Gokcesu, K.~Gokcesu, and S.~S. Kozat, ``Accelerating min-max optimization with application to minimal bounding sphere,'' {\em arXiv preprint arXiv:1905.12733}, 2019.

\bibitem{epasto2020optimal}
A.~Epasto, M.~Mahdian, V.~Mirrokni, and E.~Zampetakis, ``Optimal approximation-smoothness tradeoffs for soft-max functions,'' {\em NeurIPS}, vol.~33, pp.~2651--2660, 2020.

\bibitem{lin2024smooth}
X.~Lin, X.~Zhang, Z.~Yang, F.~Liu, Z.~Wang, and Q.~Zhang, ``Smooth {T}chebycheff scalarization for multi-objective optimization,'' {\em arXiv preprint arXiv:2402.19078}, 2024.

\bibitem{lin2005min}
J.~G. Lin, ``On min-norm and min-max methods of multi-objective optimization,'' {\em Mathematical programming}, vol.~103, no.~1, pp.~1--33, 2005.

\bibitem{li2019fair}
T.~Li, M.~Sanjabi, A.~Beirami, and V.~Smith, ``Fair resource allocation in federated learning,'' {\em arXiv preprint arXiv:1905.10497}, 2019.

\bibitem{mahapatra2021exact}
D.~Mahapatra and V.~Rajan, ``Exact {P}areto optimal search for multi-task learning and multi-criteria decision-making,'' {\em arXiv preprint arXiv:2108.00597}, 2021.

\bibitem{momma2022multi}
M.~Momma, C.~Dong, and J.~Liu, ``A multi-objective/multi-task learning framework induced by {P}areto stationarity,'' in {\em ICML}, pp.~15895--15907, PMLR, 2022.

\bibitem{Schizas08}
I.~D. Schizas, A.~Ribeiro, and G.~B. Giannakis, ``Consensus in ad hoc wsns with noisy links {—Part I: D}istributed estimation of deterministic signals,'' {\em IEEE Transactions on Signal Processing}, vol.~56, no.~1, pp.~350--364, 2008.

\bibitem{Towfic15}
Z.~J. Towfic and A.~H. Sayed, ``Stability and performance limits of adaptive primal-dual networks,'' {\em IEEE Transactions on Signal Processing}, vol.~63, no.~11, pp.~2888--2903, 2015.

\bibitem{Vlaski23}
S.~Vlaski, S.~Kar, A.~H. Sayed, and J.~M. Moura, ``Networked signal and information processing{: L}earning by multiagent systems,'' {\em IEEE Signal Processing Magazine}, vol.~40, no.~5, pp.~92--105, 2023.

\bibitem{bertsekas1997nonlinear}
D.~P. Bertsekas, ``Nonlinear programming,'' {\em Journal of the Operational Research Society}, vol.~48, no.~3, pp.~334--334, 1997.

\bibitem{arrow1958studies}
K.~J. Arrow, L.~Hurwicz, H.~Uzawa, H.~B. Chenery, S.~Johnson, and S.~Karlin, {\em Studies in linear and non-linear programming}, vol.~2.
\newblock Stanford University Press Stanford, 1958.

\bibitem{alghunaim2020linear}
S.~A. Alghunaim and A.~H. Sayed, ``Linear convergence of primal--dual gradient methods and their performance in distributed optimization,'' {\em Automatica}, vol.~117, p.~109003, 2020.

\bibitem{chen2023three}
L.~Chen, H.~Fernando, Y.~Ying, and T.~Chen, ``Three-way trade-off in multi-objective learning: Optimization, generalization and conflict-avoidance,'' {\em NeurIPS}, vol.~36, pp.~70045--70093, 2023.

\bibitem{desideri2012multiple}
J.-A. D{\'e}sid{\'e}ri, ``Multiple-gradient descent algorithm ({MGDA}) for multiobjective optimization,'' {\em Comptes Rendus Mathematique}, vol.~350, no.~5-6, pp.~313--318, 2012.

\bibitem{tanabe2019proximal}
H.~Tanabe, E.~H. Fukuda, and N.~Yamashita, ``Proximal gradient methods for multiobjective optimization and their applications,'' {\em Computational Optimization and Applications}, vol.~72, pp.~339--361, 2019.

\end{thebibliography}

\end{document}